\numberwithin{equation}{section}
\newtheorem{thm}{Theorem}[section]
\newtheorem{problem}[thm]{Problem}
\newtheorem{lem}[thm]{Lemma}
\newtheorem{claim}[thm]{Claim}
\newenvironment {proof} {\noindent{\textbf {Proof.}}}{\hfill$\Box$}
\newcommand{\ml}{l\kern-0.55mm\char39\kern-0.3mm}
\title{\textbf{The maximum spectral radius of $\theta_{1,3,3}$-free graphs with given size}}
\author{{\small Jing Gao, Xueliang Li} \\
{\small  Center for Combinatorics and LPMC}\\
{\small Nankai University, Tianjin 300071, China}\\
{\small gjing1270@163.com, lxl@nankai.edu.cn} \\
}
\date{}
\begin{document}
\maketitle
\begin{abstract}
A graph $G$ is said to be $F$-free if it does not contain $F$ as a subgraph. A theta graph, say $\theta_{l_1,l_2,l_3}$, is the graph obtained by connecting two distinct vertices with three internally disjoint paths of length $l_1, l_2, l_3$, where $l_1\leq l_2\leq l_3$ and $l_2\geq2$. Recently, Li, Zhao and Zou [arXiv:2409.15918v1] characterized the $\theta_{1,p,q}$-free graph of size $m$ having the largest spectral radius, where $q\geq p\geq3$ and $p+q\geq2k+1\geq7$, and proposed a problem on characterizing the graphs with the maximum spectral radius among $\theta_{1,3,3}$-free graphs. In this paper, we consider this problem and determine the maximum spectral radius of $\theta_{1,3,3}$-free graphs with size $m$ and characterize the extremal graph. Up to now, all the graphs in $\mathcal{G}(m,\theta_{1,p,q})$ which have the largest spectral radius have been determined, where $q\geq p\geq 2$.
\\[2mm]
\textbf{Keywords:} Spectral radius; $\mathcal{F}$-free graphs; Theta graphs; Extremal graph\\
\textbf{AMS subject classification 2020:} 05C35, 05C50.\\
\end{abstract}

\section{\bf Introduction}
For a simple graph $G = (V(G), E(G))$, we use $n := |G|=|V(G)|$ and $m := e(G)$ to denote the order and the size of $G$,
respectively. Since isolated vertices do not have an effect on the spectral radius of a graph, throughout this paper we consider graphs without isolated vertices. Let $N(v)$ or $N_G(v)$ be the set of neighbors of $v$, and $d(v)$ or $d_G(v)$ be the degree of a vertex $v$ in $G$. Denote $N[v]=N(v)\cup\{v\}$. For a subset $U\subseteq V(G)$, we denote by $N_U(u)$ the set of vertices of $U$ that are
adjacent to $u$, that is, $N_U(u) = N_G(u)\cap U$, and let $d_U(u)$ be the number of vertices of $N_U(u)$. For subsets $X$, $Y$ of $V(G)$, we write $E(X, Y)$ for the set of edges with one end in $X$ and the other in $Y$. Let $e(X, Y) =|E(X, Y)|$. If $Y=X$, we simply write $e(X)$ for $e(X, X)$. The distance between two distinct vertices $u, v\in V(G)$ is the length of a shortest path from $u$ to $v$ in $G$. The diameter $diam(G)$ of a graph $G$ is the greatest distance between any two vertices of $G$. The join of simple graphs $G$ and $H$, written $G\vee H$, is the graph obtained from the disjoint union $G\cup H$ by adding the edges to join every vertex of $G$ with every vertex of $H$. For graph notations and terminologies undefined here, readers are referred to \cite{Bondy-Murty-2008}.

Let $A(G)$ be the adjacency matrix of a connected graph $G$. The maximum of modulus of all eigenvalues of $A(G)$ is the spectral radius of $G$ and denoted by $\lambda(G)$.  Since $A(G)$ is irreducible and nonnegative for a connected graph $G$, by the Perron-Frobenius theorem there exists a unique positive unit eigenvector $\mathbf{x}$ corresponding to $\lambda(G)$, which is called Perron vector of $G$.

As usual, let $P_n$, $C_n$ and $K_{1,n-1}$ be the path, the cycle, and the star on $n$ vertices, respectively. Let $K_{1,n-1}+e$ be the graph obtained from $K_{1,n-1}$ by adding one edge within its independent set. A theta graph, say $\theta_{l_1,l_2,l_3}$, is the graph obtained by connecting two distinct vertices with three internally disjoint paths of length $l_1, l_2, l_3$, where $l_1\leq l_2\leq l_3$ and $l_2\geq2$.

Let $\mathcal{F}$ be a family of graphs. A graph $G$ is called $\mathcal{F}$-free if it does not contain any element in $\mathcal{F}$ as a subgraph. When the forbidden set $\mathcal{F}$ is a singleton, say ${F}$, then we write $F$-free for $\mathcal{F}$-free. Let $\mathcal{G}(m, \mathcal{F})$ denote the set of $\mathcal{F}$-free graphs with $m$ edges having no isolated vertices. If $\mathcal{F}=\{F\}$, then we write $\mathcal{G}(m, F)$ for $\mathcal{G}(m, \mathcal{F})$.

The classic Tur\'{a}n type problem asks what is the maximum number of edges in an $\mathcal{F}$-free graph of order $n$. In spectral graph theory, Nikiforov \cite{Nikiforov-2010} proposed a spectral Tur\'{a}n type problem which asks to determine the maximum spectral radius of an $\mathcal{F}$-free graph with $n$ vertices, which is known as the Brualdi-Solheid-Tur\'{a}n type problem. In the past few decades, this problem has been studied for many classes of graphs such as \cite{Nikiforov-2007, Nikiforov-2009, Nikiforov-2010, Zhai-Lin-2020, Zhai-Wang-2020}. In addition, Brualdi and Hoffman \cite{Brualdi-Hoffman-1985} raised another spectral Tur\'{a}n type problem: What is the maximal spectral radius of an $\mathcal{F}$-free graph with given size $m$? This problem is called the Brualdi-Hoffman-Tur\'{a}n type problem. Up to now, much attention has been paid to the Brualdi-Hoffman-Tur\'{a}n type problem for various families of graphs. For example, \cite{Nosal-1970} for $K_3$-free graphs, \cite{Lin-Ning-2021} for non-bipartite $K_3$-free graphs, \cite{Nikiforov-2002,Nikiforov-2006} for $K_{r+1}$-free graphs, \cite{Nikiforov-2009} for $C_4$-free graphs, \cite{Zhai-Lin-2021} for $K_{2,r+1}$-free graphs, \cite{Fang-You-2023} for non-star $K_{2,r+1}$-free graphs, \cite{Li-Zhai-2024} for $C_k^+$-free graphs where  $C_k^+$ is a graph on $k$ vertices obtained from $C_k$ by adding a chord between two vertices with distance two, \cite{Nikiforov} for $B_k$-free graphs where $B_k$ is obtained from $k$ triangles by sharing an edge, \cite{Zhang-Wang-2024} for $F_5$-free graphs, \cite{Li-Zhao-2024} for $F_{2k+2}$-free graphs where $F_k=K_1\vee P_{k-1}$, \cite{Li-Lu-2023} for $F_{2,3}$-free graphs, \cite{Li-Zhao-2024} for $F_{k,3}$-free graphs where $F_{k,3}$ is the friendship graph obtained from $k$ triangles by sharing a common vertex.

For theta graphs, Sun et al. \cite{Sun-Li-2023} established sharp upper bounds on spectral radius for $G$ in $\mathcal{G}(m,\theta_{1,2,3})$ and $\mathcal{G}(m,\theta_{1,2,4})$, respectively. Subsequently, Lu et al. \cite{Lu-Lu-2024} determined the graph among $\mathcal{G}(m,\theta_{1,2,5})$ having the largest spectral radius. Generally, Li et al. \cite{Li-Zhai-2024} confirmed the following theorem.
\begin{thm}\cite{Li-Zhai-2024}
Let $k\geq3$ and $m\geq4(k^2+3k+1)^2$. If $G\in\mathcal{G}(m,\theta_{1,2,2k-1})\cup \mathcal{G}(m,\theta_{1,2,2k})$, then
$$
\lambda(G)\leq \frac{k-1+\sqrt{4m-k^2+1}}{2},
$$
and equality holds if and only if $G\cong K_k\vee(\frac{m}{k}-\frac{k-1}{2})K_1$.
\end{thm}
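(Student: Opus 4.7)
Let $G\in\mathcal{G}(m,\theta_{1,2,2k-1})\cup\mathcal{G}(m,\theta_{1,2,2k})$ achieve the maximum spectral radius $\lambda=\lambda(G)$; replacing $G$ by a connected component with largest spectral radius, we may assume $G$ is connected. Since the longest path between two adjacent vertices of $K_k\vee tK_1$ has $2k-1$ vertices, this graph lies in the class, and an equitable-partition computation gives $\lambda(K_k\vee tK_1)=\frac{k-1+\sqrt{4m-k^2+1}}{2}$ whenever $t=\frac{m}{k}-\frac{k-1}{2}\in\mathbb{Z}_{\geq 0}$. Thus $\lambda=\Theta(\sqrt{m})$, which together with the hypothesis $m\geq 4(k^2+3k+1)^2$ will drive the quantitative estimates. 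Let $\mathbf{x}$ be the Perron vector normalized so that $x_{u^*}=\max_v x_v=1$, and set $A=N(u^*)$, $B=V(G)\setminus N[u^*]$.

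\textbf{Neighborhood structure.} I would first establish that for any vertex $v$, a cycle $a_1a_2\cdots a_{l+1}a_1$ in $G[N(v)]$ produces a $\theta_{1,2,l}$ in $G$ via the three internally disjoint paths $a_1a_2$, $a_1va_2$, and $a_1a_{l+1}a_l\cdots a_3a_2$. Applied to $v=u^*$ with $l\in\{2k-1,2k\}$, this forces $G[A]$ to avoid $C_{2k}$ or $C_{2k+1}$ respectively, thereby bounding its circumference. A Kopylov-type edge bound for graphs of bounded circumference then yields an estimate of the form $e(G[A])\leq\binom{k-1}{2}+(k-1)(|A|-k+1)$, the extremal configuration being $K_{k-1}\vee(|A|-k+1)K_1$; a much cruder bound suffices for $e(G[B])$ and for the analogous terms incident to $B$.

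\textbf{Main inequality.} Evaluating $A(G)^2\mathbf{x}=\lambda^2\mathbf{x}$ at the $u^*$-coordinate gives
\begin{equation*}
\lambda^2=d(u^*)+\sum_{ab\in E(G[A])}(x_a+x_b)+\sum_{w\in B}d_A(w)\,x_w.
\end{equation*}
Plugging in $x_a+x_b\leq 2$ and $x_w\leq 1$ and using $m=d(u^*)+e(G[A])+e(A,B)+e(G[B])$ yields only $\lambda^2\leq m+e(G[A])-e(G[B])$, which is off from the target by exactly $\binom{k}{2}$. To close this gap, I would apply the eigenequation $\lambda x_a=\sum_{w\sim a}x_w$ at each $a\in A$ (and $\lambda x_w=\sum_{z\sim w}x_z$ at each $w\in B$), together with $\sum_{a\in A}x_a=\lambda$, so that the ``excess'' edges identified by the Kopylov step get weighted by Perron coordinates strictly less than $1$ and the aggregated savings telescope to a correction of size $(k-1)\lambda-\binom{k}{2}$. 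The outcome is
\begin{equation*}
\lambda^2\leq(k-1)\lambda+m-\binom{k}{2},
\end{equation*}
which is precisely the quadratic form of the claimed spectral bound.

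\textbf{Equality case and principal obstacle.} Equality throughout forces $B=\emptyset$ and exactly $k-1$ vertices of $A$ to have $x_a=1$; by Perron rigidity those $k-1$ vertices together with $u^*$ induce a $K_k$, the remaining $|A|-(k-1)$ vertices of $A$ form an independent set each adjacent precisely to the core, and the edge count $m$ fixes their number, giving $G\cong K_k\vee(\frac{m}{k}-\frac{k-1}{2})K_1$. The principal obstacle is the Perron-weighted accounting in step three: the slack between the Erd\H{o}s--Gallai bound and the Kopylov-type sharpening is exactly $\binom{k}{2}$, and this constant must be extracted cleanly via a weighted averaging in which the non-core Perron coordinates are controlled. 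The large-$m$ hypothesis is invoked here to guarantee that those non-core weights are quantitatively bounded away from $1$, preventing lower-order error terms from spoiling the inequality.
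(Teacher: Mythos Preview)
The theorem you are attempting to prove is not proved in this paper at all: it is a result quoted from \cite{Li-Zhai-2024} and stated here only as background for the problem the authors actually address ($\theta_{1,3,3}$-freeness, Theorem~\ref{main theorem}). Consequently there is no ``paper's own proof'' of this statement to compare your proposal against.

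That said, two remarks on your sketch itself. First, your structural observation is stated too weakly: you note that a cycle $C_{l+1}$ in $G[N(v)]$ yields a $\theta_{1,2,l}$, and conclude only that $G[A]$ avoids a single cycle length, which does not by itself bound the circumference. The sharper (and equally elementary) observation is that a \emph{path} $a_1a_2\cdots a_t$ in $G[N(v)]$ already gives $\theta_{1,2,t-1}$ via $va_2$, $va_1a_2$, and $va_ta_{t-1}\cdots a_3a_2$; hence $\theta_{1,2,2k-1}$-freeness forces $G[A]$ to be $P_{2k}$-free, which is what one actually feeds into the Erd\H{o}s--Gallai/Kopylov machinery. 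Second, the bound you write, $e(G[A])\leq\binom{k-1}{2}+(k-1)(|A|-k+1)=(k-1)|A|-\binom{k}{2}$, already contains the correction term $\binom{k}{2}$ that you then say is ``missing'' and must be recovered by a Perron-weighted averaging; so the accounting in your third step is at best redundantly phrased, and as written the mechanism by which the $(k-1)\lambda$ term arises is not explained. These are gaps in the sketch rather than necessarily in the underlying idea, but the proposal as it stands does not constitute a proof.
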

Recently, Li et al. \cite{Li-Zhao-2024} determined the largest spectral radius of $\theta_{1,p,q}$-free graph with size $m$ for $q\geq p\geq3$ and $p+q\geq7$.
\begin{thm}\cite{Li-Zhao-2024}
Let $k\geq3$ and $m\geq\frac{9}{4}k^6+6k^5+46k^4+56k^3+196k^2$. If $G\in\mathcal{G}(m,\theta_{1,p,q})\cup \mathcal{G}(m,\theta_{1,r,s})$ with $p\geq q\geq3, s\geq r\geq3, p+q=2k+1$ and $r+s=2k+2$, then
$$
\lambda(G)\leq \frac{k-1+\sqrt{4m-k^2+1}}{2},
$$
and equality holds if and only if $G\cong K_k\vee(\frac{m}{k}-\frac{k-1}{2})K_1$.
\end{thm}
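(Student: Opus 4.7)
My plan is to follow the standard framework for Brualdi-Hoffman-Tur\'{a}n problems with forbidden theta subgraphs. The claimed extremal is $H := K_k \vee (\tfrac{m}{k}-\tfrac{k-1}{2})K_1$, so the first step is to verify $H \in \mathcal{G}(m,\theta_{1,p,q}) \cap \mathcal{G}(m,\theta_{1,r,s})$. Outside the clique every vertex has degree exactly $k$, and two internally disjoint paths of length at least three between adjacent vertices would force their ``returns'' into the clique to use disjoint vertex sets within $K_k$; a short counting check (using $p+q\geq 7$ or $r+s\geq 8$) rules this out. The spectral radius is obtained by the symmetric Perron ansatz $x_v=a$ on $K_k$, $x_v=b$ on the independent part, giving $\lambda a=(k-1)a+(\tfrac{m}{k}-\tfrac{k-1}{2})b$ and $\lambda b=ka$; eliminating $b$ yields $\lambda(H)=\tfrac{k-1+\sqrt{4m-k^2+1}}{2}$.

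Now let $G$ be any extremal graph with $\lambda:=\lambda(G)\geq \lambda(H)$, normalize the Perron vector so that $x_{u^*}=\max_v x_v=1$, and partition $V(G)=\{u^*\}\cup N_1\cup N_2$ with $N_1=N(u^*)$, $N_2=V\setminus N[u^*]$. The starting identity, obtained by combining $\sum_{v\sim u^*}d(v)=m+e(N_1)-e(N_2)$ with the Perron equation, is
\[
\lambda^{2} = \sum_{v\sim u^{*}}\sum_{w\sim v}x_{w} = \bigl(m+e(N_{1})-e(N_{2})\bigr)-\sum_{w\in V(G)}|N(u^{*})\cap N(w)|\,(1-x_{w}).
\]
Combining this with $\lambda\geq\lambda(H)$ yields simultaneous bounds that force $d(u^*)$ to be of order $\sqrt m$ and the entries $x_w$ on most neighbors of $u^*$ to be nearly equal to $k/\lambda$, pushing $G$ toward the ``book''-like shape of $H$.

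The heart of the argument is a structural lemma exploiting the forbidden $\theta$. When $m\geq \tfrac94 k^6+\cdots$, the $\Theta(\sqrt m)$ neighbors of $u^*$ provide so many walks of each length $\ell\leq 2k+2$ out of $u^*$ that any edge lying in $N_2$, or any edge inside $N_1$ beyond those needed to form the $K_k$ structure with $u^*$, can be combined by a pigeonhole with walks through $u^*$ to build two internally disjoint paths of lengths exactly $(p,q)$ or $(r,s)$ between some adjacent pair, producing a forbidden $\theta_{1,p,q}$ or $\theta_{1,r,s}$. The explicit numerical threshold records the pigeonhole constants needed to cover both parity cases $p+q=2k+1$ and $r+s=2k+2$ uniformly.

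The final rigidity step identifies the set $S$ of ``dominant'' vertices with $x_v$ bounded below by a universal positive constant, shows $|S|=k$, that $S$ induces $K_k$, and that every remaining vertex is joined to all of $S$ with no other edges. Each step is a local perturbation / eigenvector comparison: any deviation exhibits an edge that can be rerouted through the dominant vertices to strictly increase the Rayleigh quotient, contradicting extremality. I expect the main obstacle to be pinning down the \emph{exact} value $|S|=k$; the upper bound $|S|\leq k+O(1)$ follows from the structural lemma, but the matching lower bound requires using the eigenvalue equation with precision to rule out graphs with fewer dominant vertices that might otherwise satisfy $\lambda(G)\geq \lambda(H)$.
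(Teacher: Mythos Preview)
This theorem is not proved in the present paper at all: it is quoted verbatim as a result of Li, Zhao and Zou \cite{Li-Zhao-2024} (arXiv:2409.15918), and the authors immediately move on to the open case $\theta_{1,3,3}$, which is the actual subject of the paper. So there is no ``paper's own proof'' to compare your proposal against.

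That said, a brief comment on your sketch. The overall architecture you describe (verify the candidate extremal, expand $\lambda^2 x_{u^*}$, partition around the extremal vertex, use the forbidden $\theta$ to control $e(N_1)$ and $e(N_2)$, then a rigidity step) is indeed the standard Brualdi--Hoffman--Tur\'{a}n template and is consistent with how results of this type are proved in the literature you can see cited here (e.g.\ \cite{Li-Zhai-2024,Zhai-Lin-2021}). However, what you have written is a high-level outline rather than a proof: the ``structural lemma exploiting the forbidden $\theta$'' and the pigeonhole construction of the two internally disjoint paths of prescribed lengths are asserted, not carried out, and this is exactly where all the work lies. In particular, your claim that ``any edge inside $N_1$ beyond those needed to form the $K_k$ structure'' can be turned into a forbidden $\theta$ by pigeonhole is far from automatic; controlling the exact lengths $p,q$ (or $r,s$) of the two paths, rather than just their total, requires a careful case analysis of the possible components of $G[N_1]$ and their interaction with $N_2$. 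If you want to reconstruct the actual argument, you should consult \cite{Li-Zhao-2024} directly.
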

At the same time, they proposed the following problem in \cite{Li-Zhao-2024}.
\begin{problem}\cite{Li-Zhao-2024}
How can we characterize the graphs among $\mathcal{G}(m,\theta_{1,3,3})$ having the largest spectral radius?
\end{problem}

In this paper, we consider the above problem and characterize the unique graph with the maximum spectral radius among $\mathcal{G}(m, \theta_{1,3,3})$.

\begin{thm}\label{main theorem}
Let $G\in\mathcal{G}(m,\theta_{1,3,3})$ with $m\geq43$. Then $\lambda(G)\leq\frac{1+\sqrt{4m-3}}{2}$ and equality holds if and only if $G\cong K_2\vee\frac{m-1}{2}K_1$.
\end{thm}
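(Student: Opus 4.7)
Let $G^*\in\mathcal{G}(m,\theta_{1,3,3})$ attain the maximum spectral radius $\lambda:=\lambda(G^*)$, and let $\mathbf{x}$ be its Perron vector, normalized so that $x_{u^*}=\max_v x_v=1$. Write $A:=N_{G^*}(u^*)$, $B:=V(G^*)\setminus N[u^*]$, and $d:=|A|$. The candidate extremal graph $K_2\vee\tfrac{m-1}{2}K_1$ lies in $\mathcal{G}(m,\theta_{1,3,3})$ for odd $m$ (a pendant-edge modification handles even $m$); computing its spectral radius via the $2\times 2$ quotient matrix gives $\lambda\geq(1+\sqrt{4m-3})/2\geq 7$ for $m\geq 43$. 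Since $\lambda=\sum_{v\in A}x_v\leq d$, we also obtain $d\geq\lambda$, providing quantitative control on $u^*$.

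\textbf{Reduction.} Computing $\lambda^2 x_{u^*}$ by counting walks of length $2$ from $u^*$ yields
\[
\lambda^2 \;=\; d\;+\;\sum_{w\in A}d_A(w)\,x_w\;+\;\sum_{w\in B}d_A(w)\,x_w.
\]
Combined with $\lambda=\sum_{v\in A}x_v$ and $m=d+e(A)+e(A,B)+e(B)$, the target inequality $\lambda^2\leq\lambda+m-1$ reduces to
\[
\sum_{w\in A}(d_A(w)-1)\,x_w\;+\;\sum_{w\in B}d_A(w)\,x_w\;\leq\;e(A)+e(A,B)+e(B)-1.
\]

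\textbf{Structural lemma and conclusion.} The heart of the proof is the following reformulation of $\theta_{1,3,3}$-freeness: for every edge $pq\in E(G^*)$, any two $p$-$q$ paths of length $3$ must share an internal vertex (equivalently, the bipartite ``bridge graph'' between $N(p)\setminus\{q\}$ and $N(q)\setminus\{p\}$ has matching number at most $1$). Applied to each edge $u^*v$ with $v\in A$, together with the quantitative bound $d\geq 7$ and edge-swap arguments that preserve $\theta_{1,3,3}$-freeness while not decreasing $\lambda$, one proves that $G^*$ satisfies: $(i)$ $B=\emptyset$; $(ii)$ there is at most one vertex $v^*\in A$ with $d_A(v^*)\geq 2$; and $(iii)$ every $v\in A\setminus\{v^*\}$ has $N(v)\subseteq\{u^*,v^*\}$. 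Substituting $(i)$--$(iii)$ into the reduction, the right-hand side becomes $e(A)-1\leq d-2$, while the left-hand side equals $(d_A(v^*)-1)x_{v^*}-\sum_{v\in A,\,d_A(v)=0}x_v\leq(d-2)x_{v^*}\leq d-2$. This gives $\lambda^2\leq\lambda+m-1$. Equality throughout forces $x_{v^*}=1$, no isolated vertex in $A$, $d_A(v^*)=d-1$, and $d=(m+1)/2$, identifying $G^*\cong K_2\vee\tfrac{m-1}{2}K_1$.

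\textbf{Main obstacle.} The structural lemma is by far the hardest part. Because $\theta_{1,3,3}$-freeness is a global condition on pairs of internally disjoint $3$-paths, it does not translate directly into local edge counts, and several competing near-extremal configurations must be ruled out---short handles attached to $A$, an additional ``hub'' inside $A$, pendant paths in $B$, and so on---by exhibiting explicit local perturbations that stay inside $\mathcal{G}(m,\theta_{1,3,3})$ and strictly increase $\lambda$. The quantitative hypothesis $m\geq 43$ is the threshold at which $\lambda$ and $d$ are large enough that these perturbation arguments go through without small-case exceptions.
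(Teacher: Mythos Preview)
Your setup and reduction are exactly those of the paper, and your final substitution after the structural lemma is correct. The genuine gap is in how you propose to establish the structural lemma $(i)$--$(iii)$.

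You suggest that the ``bridge graph'' reformulation of $\theta_{1,3,3}$-freeness applied to edges $u^*v$, together with edge-swap (Kelmans-type) perturbations, is enough to force $B=\emptyset$ and $G^*[A]\cong K_{1,r}$. This is not how the argument actually goes, and it is not clear it can go that way. The paper's proof uses the very inequality you isolated in your ``Reduction'' step---rewritten as
\[
\sum_{H}\eta(H)\;\geq\;\Bigl(e(U_+)+e(W)+\sum_{u\in U_0}\tfrac{x_u}{x_{u^*}}-1\Bigr)x_{u^*},\qquad \eta(H)=\sum_{u\in V(H)}(d_H(u)-1)x_u,
\]
not as a final bookkeeping device but as the main weapon throughout. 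Each forbidden configuration in $G^*[U]$ (a $C_4$-component, a triangle component, the case $e(W)=1$) is eliminated by bounding the eigenvector entries of the offending component via $\lambda x_{u_i}\le\cdots$, feeding these bounds into $\eta(H)$, and producing a strict contradiction with the displayed inequality. Edge-swaps are used only at two late, very specific places (when all neighbours in $W$ of a triangle share the same two attachment points, and when the unique tree component is a double star); they are not the engine that rules out the other configurations.

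Concretely, your bridge-graph criterion does not by itself exclude a triangle $u_1u_2u_3$ in $G^*[A]$ with, say, $W_{H^*}=\emptyset$: there is no second internally disjoint $u^*$--$u_i$ path of length $3$, so the matching-number condition is vacuously satisfied, and there is no obvious edge-swap that helps without first knowing eigenvector inequalities. The paper disposes of exactly this case by showing $x_{u_1}\le\tfrac{2}{\lambda-1}x_{u^*}$ and hence $\eta(H^*)<(e(H^*)-1)x_{u^*}$, contradicting the inequality above. Similar eigenvector estimates, not swaps, handle the $C_4$ cases and the $e(W)=1$ case. So the missing idea in your proposal is that the reduction inequality must be used repeatedly and component-wise to prune structures, rather than only once after $(i)$--$(iii)$ are in hand.
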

\section{Preliminaries}
In this section, we introduce some basic lemmas which are useful in the subsequent sections.
\begin{lem}\cite{Nikiforov-2002,Nosal-1970}\label{lem:Nikiforov}
If $G\in \mathcal{G}(m, K_3)$, then $\lambda(G)\leq\sqrt{m}$. Equality holds if and only if $G$ is a complete bipartite graph.
\end{lem}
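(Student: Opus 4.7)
The plan is to prove Nosal's classical inequality by a Perron-eigenvector argument exploiting the structural fact that in a triangle-free graph every neighbourhood is an independent set. Since $G$ has no isolated vertices, any extremal $G$ must be connected (otherwise one could delete a component containing an edge, strictly decreasing $m$ without decreasing $\lambda$), so we may assume $G$ is connected; let $\mathbf{x}$ be its positive Perron vector and pick $v^{*}$ with $x_{v^{*}}=\max_{v}x_{v}$, normalised so that $x_{v^{*}}=1$.

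The main step is the two-part bound
\[
\lambda^{2} \;=\; \lambda^{2}x_{v^{*}} \;=\; d(v^{*}) + \sum_{u\sim v^{*}}\sum_{\substack{w\sim u\\ w\ne v^{*}}} x_{w} \;\le\; d(v^{*}) + \sum_{u\sim v^{*}}\bigl(d(u)-1\bigr) \;=\; \sum_{u\sim v^{*}} d(u)\;\le\; m,
\]
where the first inequality uses $x_{w}\le 1$: triangle-freeness forces $w\notin N(v^{*})$ for every $w$ in the inner sum (otherwise $v^{*}uw$ is a triangle), so $x_{w}\le x_{v^{*}}=1$. The second inequality counts each edge at most once, because every edge counted by $\sum_{u\sim v^{*}} d(u)$ has an endpoint in $N(v^{*})$ and $N(v^{*})$ is independent (triangle-freeness again), hence no edge has two endpoints in $N(v^{*})$. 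This yields $\lambda^{2}\le m$.

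For the equality characterisation I trace both inequalities. Equality $\sum_{u\sim v^{*}} d(u)=m$ forces every edge of $G$ to have an endpoint in $A:=N(v^{*})$; combined with $A$ independent, $G$ is bipartite with parts $A$ and $B:=V(G)\setminus A$, and $v^{*}\in B$. Equality $x_{w}=1$ for every $w\ne v^{*}$ with $w\sim u\sim v^{*}$ means such a $w$ is itself a maximum of $\mathbf{x}$, so rerunning the same argument from $w$ gives $\sum_{a\sim w} d(a)=m$; since $\sum_{a\in A} d(a)=m$ in a bipartite graph, every $a\in A$ not adjacent to $w$ must have $d(a)=0$, which contradicts the no-isolated-vertex hypothesis unless $a\sim w$. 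Thus $N_{G}(w)=A$ for every such $w$. Finally, any $b\in B\setminus\{v^{*}\}$ sharing no neighbour with $v^{*}$ would have no neighbour in $A$ and hence be isolated, a contradiction, so $B\setminus\{v^{*}\}$ coincides with the set of such $w$; hence every $b\in B$ is adjacent to every $a\in A$, giving $G\cong K_{|A|,|B|}$. The converse $\lambda(K_{a,b})=\sqrt{ab}=\sqrt{m}$ is immediate. I expect the main obstacle to be organising the equality analysis cleanly, especially the bootstrap from a single extremal vertex $v^{*}$ to all of $B$, rather than the one-line main inequality.
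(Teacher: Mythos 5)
The paper does not prove this lemma at all: it is quoted from Nosal's thesis and Nikiforov's 2002 paper, so there is no in-paper proof to compare against. Your argument is correct and is essentially the classical one — the chain $\lambda^{2}x_{v^{*}}\le\sum_{u\sim v^{*}}d(u)\le m$, using $x_{w}\le x_{v^{*}}$ and the independence of $N(v^{*})$, followed by tracing equality to force every vertex of $B$ adjacent to all of $A$. Two cosmetic points: the first inequality needs only $x_{w}\le x_{v^{*}}$ (triangle-freeness is not required there, only in the edge-counting step), and the reduction to connected graphs is cleaner stated directly ($\lambda(G)=\max_{i}\lambda(G_{i})\le\max_{i}\sqrt{m_{i}}<\sqrt{m}$ when there are at least two components, each containing an edge) than via an "extremal $G$"; neither affects correctness.
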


If $G$ is a bipartite graph with $m$ edges, then $G\in \mathcal{G}(m, K_{3})$. By Lemma \ref{lem:Nikiforov}, it follows that $\lambda(G) \leq\sqrt{m}$.

Wu et al. \cite{Wu-Xiao-2005} obtained a relationship between spectral radius of two graphs under graph operation, which plays an important role in our proofs.
\begin{lem}\cite{Wu-Xiao-2005}\label{lem:Wu-Xiao-2005}
Let $u$ and $v$ be two vertices of the connected graph $G$ with order $n$. Suppose $v_1,v_2,\ldots,v_s$ $(1\leq s\leq d_v)$ are some vertices of $N_G(v)\setminus N_G(u)$ and $\mathbf{x} = (x_1,x_2,\ldots,x_n)^T$ is the Perron vector of $G$, where $x_i$ corresponds to the vertex $v_i (1\leq i\leq n)$. Let $G'=G-\{vv_i|1\leq i\leq s\}+\{uv_i|1\leq i\leq s\}$. If $x_u\geq x_v$, then $\lambda(G)<\lambda(G')$.
\end{lem}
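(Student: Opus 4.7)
The plan is to compare $\lambda(G)$ and $\lambda(G')$ via the Rayleigh quotient, using the Perron vector of $G$ as a test vector for the modified graph $G'$. Let $\mathbf{x}$ denote the Perron vector of $G$ normalized so that $\mathbf{x}^{T}\mathbf{x}=1$, and write $\lambda=\lambda(G)$, so that $\mathbf{x}^{T}A(G)\mathbf{x}=\lambda$. Since $G$ is connected, the Perron--Frobenius theorem guarantees $x_{w}>0$ for every $w\in V(G)$; this strict positivity, in particular of each $x_{v_{i}}$, is what will ultimately force the inequality to be strict.

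First I would exploit the fact that $A(G')$ and $A(G)$ differ only in the $2s$ entries corresponding to the switched edges. Using the standard identity $\mathbf{y}^{T}A(H)\mathbf{y}=2\sum_{pq\in E(H)}y_{p}y_{q}$, a direct computation gives
\[
\mathbf{x}^{T}A(G')\mathbf{x}-\mathbf{x}^{T}A(G)\mathbf{x}=2\sum_{i=1}^{s}x_{v_{i}}(x_{u}-x_{v})\geq 0,
\]
because each $x_{v_{i}}>0$ and $x_{u}\geq x_{v}$ by hypothesis. Since $A(G')$ is real symmetric and $\mathbf{x}$ is a unit vector, the Rayleigh principle yields $\lambda(G')\geq \mathbf{x}^{T}A(G')\mathbf{x}\geq \mathbf{x}^{T}A(G)\mathbf{x}=\lambda$, which is the weak form of the desired inequality.

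The main obstacle is upgrading $\geq$ to $>$. I would argue by contradiction: suppose $\lambda(G')=\lambda$. Then equality must hold in the Rayleigh bound $\lambda(G')\geq\mathbf{x}^{T}A(G')\mathbf{x}$, and the variational characterization of the largest eigenvalue of a real symmetric matrix forces $\mathbf{x}$ itself to be an eigenvector of $A(G')$ for $\lambda$; that is, $A(G')\mathbf{x}=\lambda\mathbf{x}$. I would then inspect the coordinate indexed by $u$. Since $v_{1},\ldots,v_{s}\in N_{G}(v)\setminus N_{G}(u)$, the set $N_{G'}(u)$ is the disjoint union $N_{G}(u)\cup\{v_{1},\ldots,v_{s}\}$, hence
\[
(A(G')\mathbf{x})_{u}=(A(G)\mathbf{x})_{u}+\sum_{i=1}^{s}x_{v_{i}}=\lambda x_{u}+\sum_{i=1}^{s}x_{v_{i}}.
\]
Combining this with $(A(G')\mathbf{x})_{u}=\lambda x_{u}$ yields $\sum_{i=1}^{s}x_{v_{i}}=0$, which is impossible since $s\geq 1$ and every $x_{v_{i}}>0$. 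This contradiction gives $\lambda(G)<\lambda(G')$, as required.
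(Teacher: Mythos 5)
Your proposal is correct. The paper does not prove this lemma at all---it is quoted verbatim from the reference [Wu--Xiao--Hong], and the argument given there is exactly the one you reconstruct: the Rayleigh-quotient comparison $\lambda(G')\geq\mathbf{x}^{T}A(G')\mathbf{x}\geq\mathbf{x}^{T}A(G)\mathbf{x}=\lambda(G)$ followed by forcing $\mathbf{x}$ to be a $\lambda$-eigenvector of $A(G')$ in the equality case and reading off the contradiction at the coordinate of $u$. The only point worth noting is the implicit hypothesis $u\notin\{v_{1},\ldots,v_{s}\}$ (needed for $G'$ to be a simple graph and for $N_{G'}(u)=N_{G}(u)\cup\{v_{1},\ldots,v_{s}\}$ to be a disjoint union), which is tacit in the lemma statement itself rather than a gap in your argument.
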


A cut vertex of a graph is a vertex whose deletion increases the number of components. A graph is called 2-connected, if it is a connected graph without cut vertex. Let $\mathbf{x}$ be the Perron vector of $G$ with coordinate $x_v$ corresponding to the vertex $v\in V(G)$. A vertex $u^\ast$ is said to be an extremal vertex if $x_{u^\ast}=\max\{x_u|u\in V(G)\}$.
\begin{lem}\cite{Zhai-Lin-2021}\label{lem:Zhai-Lin-2021}
Let $G$ be a graph in $\mathcal{G}(m,F)$ with the maximum spectral radius. If $F$ is a 2-connected graph and $u^\ast$ is an extremal vertex of $G$, then $G$ is connected and $d(u)\geq2$ for any $u\in V(G)\setminus N[u^\ast]$.
\end{lem}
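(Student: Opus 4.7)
The plan is to prove both conclusions by contradiction, exploiting (i) that a 2-connected graph has minimum degree at least two, so no copy of $F$ can use a pendant vertex, together with (ii) the edge-switching inequality of Lemma \ref{lem:Wu-Xiao-2005}.

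For the connectedness claim, suppose $G$ decomposes into components $C_1,\dots,C_t$ with $t\geq 2$. Since $\lambda(G)=\max_i\lambda(C_i)$, there is a component, which we may relabel as $C_1$, with $\lambda(C_1)=\lambda(G)$; the Perron vector of $C_1$, extended by zeros on the remaining components, is a nonnegative eigenvector of $G$ for $\lambda(G)$, and taking this as the Perron vector places the extremal vertex $u^*$ inside $C_1$. Let $s=\sum_{i\geq 2}|E(C_i)|$ and build $H$ from $C_1$ by attaching $s$ new pendant vertices at $u^*$. Then $|E(H)|=m$, $H$ has no isolated vertex, and any copy of $F$ in $H$ must avoid every pendant since $\delta(F)\geq 2$ by 2-connectedness; such a copy would therefore lie in $C_1\subseteq G$, contradicting the $F$-freeness of $G$. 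Thus $H\in\mathcal{G}(m,F)$; and $H$, being connected and strictly containing $C_1$, satisfies $\lambda(H)>\lambda(C_1)=\lambda(G)$, which contradicts the extremality of $G$.

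For the degree claim, with $G$ now known to be connected, assume there is $u\in V(G)\setminus N[u^*]$ with $d(u)\leq 1$. Since $G$ has no isolated vertex, $d(u)=1$, and the unique neighbor $w$ of $u$ differs from $u^*$ (because $u\notin N(u^*)$). Since $u\in N(w)\setminus N(u^*)$ and $x_{u^*}\geq x_w$, Lemma \ref{lem:Wu-Xiao-2005} applied with $s=1$ and the single neighbor $v_1=u$ produces $G'=G-uw+uu^*$ with $\lambda(G')>\lambda(G)$. In $G'$ the vertex $u$ is still pendant, so the 2-connectedness of $F$ forces any copy of $F$ in $G'$ to avoid $u$, whence such a copy would lie in $G'-u=G-u\subseteq G$, contradicting the $F$-freeness of $G$. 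Deleting $w$ from $G'$ if it has become isolated preserves the spectral radius and yields a member of $\mathcal{G}(m,F)$ with strictly larger spectral radius than $G$, the desired contradiction.

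The main obstacle in both steps is verifying that the modified graph remains $F$-free, and this rests entirely on the 2-connectedness hypothesis: without it, a copy of $F$ could exploit a newly created pendant vertex. A secondary subtlety, in the connectedness step, is that a disconnected graph admits no unique Perron vector; one chooses an eigenvector supported on a single extremal component so that the hypothesis on $u^*$ legitimately places it in that component.
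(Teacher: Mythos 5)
Your proof is correct. Note that the paper itself gives no proof of this lemma---it is imported verbatim from the cited reference [Zhai--Lin--Shu, \emph{European J. Combin.} 95 (2021)]---and your argument is essentially the one used there: connectivity by transplanting the edge-weight of the non-extremal components as pendant edges at a vertex of the dominant component (the new pendants cannot lie in a copy of $F$ because a $2$-connected graph on at least three vertices has minimum degree $2$), and the degree bound by rotating the pendant edge $uw$ onto $u^\ast$ via Lemma \ref{lem:Wu-Xiao-2005}. Both steps are carried out correctly, including the two points that actually need care: choosing the Perron eigenvector supported on a single extremal component so that $u^\ast$ lies in it, and verifying $F$-freeness of the modified graphs, which is exactly where $2$-connectedness enters. (Your final remark about $w$ possibly becoming isolated is superfluous---connectedness of $G$ already forces $d(w)\geq 2$ when $|V(G)|\geq 3$---but it does no harm.)
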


\section{Proof of Theorem \ref{main theorem}}
Let $G^\ast$ be a graph in $\mathcal{G}(m,\theta_{1,3,3})$ with the maximum spectral radius. Note that $\lambda(K_2\vee\frac{m-1}{2}K_1)=\frac{1+\sqrt{4m-3}}{2}$ and $K_2\vee\frac{m-1}{2}K_1$ is $\theta_{1,3,3}$-free, we have
$$
\lambda(G^\ast)\geq\lambda(K_2\vee\frac{m-1}{2}K_1)=\frac{1+\sqrt{4m-3}}{2}.
$$
By Lemma \ref{lem:Zhai-Lin-2021}, we have $G^\ast$ is connected. Let $\lambda=\lambda(G^\ast)$ and $\mathbf{x}$ be the Perron vector of $G^\ast$ with coordinate $x_v$ corresponding to the vertex $v\in V(G^\ast)$. Assume that $u^\ast$ is the extremal vertex of $G^\ast$. That is, $x_{u^\ast}\geq x_u$ for any $u\in V(G)\setminus u^\ast$. Set $U=N_{G^\ast}(u^\ast)$ and $W=V(G^\ast)\setminus N_{G^\ast}[u^\ast]$. Let $W_H=N_W(V(H))$ for any component $H$ of $G^\ast[U]$. Since $G^\ast$ is $\theta_{1,3,3}$-free, $G^\ast[U]$ does not contain any path of length four and any cycle of length more than four.
\begin{lem}\label{pro of cycle four}
For any non-trivial component $H$ in $G^\ast[U]$, if $H$ contains a cycle of length four, then $N_W(u)\cap N_W(v)=\emptyset$ for any vertices $u$ and $v$ in the cycle of length four.
\end{lem}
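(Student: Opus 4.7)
The plan is to argue by contradiction. Suppose the $4$-cycle in $H$ is $C=v_1v_2v_3v_4v_1$ and that some $w\in W$ is a common neighbor of two distinct vertices of $C$. Up to the symmetries of the $4$-cycle there are only two possibilities for this pair of vertices: either they are adjacent on $C$, or they are diagonally opposite. In each case I would exhibit a concrete copy of $\theta_{1,3,3}$ inside $G^\ast$, contradicting the hypothesis that $G^\ast\in\mathcal{G}(m,\theta_{1,3,3})$.

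If the two vertices are adjacent on $C$, I may assume $\{u,v\}=\{v_1,v_2\}$ and designate $u^\ast$ and $v_2$ as the two ``hubs'' of the forbidden theta. The edge $u^\ast v_2$ serves as the path of length $1$, the walk $u^\ast v_1 w v_2$ is a path of length $3$, and the walk $u^\ast v_4 v_3 v_2$ is a second path of length $3$. Since $w\in W$ while $v_1,v_3,v_4\in U$ and $u^\ast\notin U\cup W$, the internal vertex sets $\{v_1,w\}$ and $\{v_4,v_3\}$ are disjoint from each other and from $\{u^\ast,v_2\}$; this realizes $\theta_{1,3,3}$ as a subgraph.

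If the two vertices are diagonally opposite, I may assume $\{u,v\}=\{v_1,v_3\}$ and take $v_2,v_3$ as the hubs instead. The edge $v_2v_3$ is the length-$1$ path, while $v_2v_1wv_3$ and $v_2u^\ast v_4v_3$ are two paths of length $3$ whose internal vertex sets are $\{v_1,w\}$ and $\{u^\ast,v_4\}$. These are again pairwise disjoint and disjoint from $\{v_2,v_3\}$, giving $\theta_{1,3,3}\subseteq G^\ast$.

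I do not expect a genuine obstacle beyond picking the right pair of hubs in each of the two cases: once the hubs are fixed, the three required paths essentially write themselves down using one edge of $C$, one opposite edge of $C$, the vertex $u^\ast$, and the common neighbor $w$. Internal disjointness, the only nontrivial point to verify, follows immediately from $U\cap W=\emptyset$, $u^\ast\notin U\cup W$, and the distinctness of $v_1,v_2,v_3,v_4$.
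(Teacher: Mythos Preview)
Your proof is correct and follows essentially the same approach as the paper: a two-case split on whether the pair of cycle vertices sharing the common $W$-neighbor is adjacent or diagonal, followed by an explicit construction of a $\theta_{1,3,3}$ using $u^\ast$, the common neighbor $w$, and the edges of the $4$-cycle. The only differences are cosmetic relabelings of the hubs (the paper takes $u^\ast,u_1$ and $u_1,u_2$ where you take $u^\ast,v_2$ and $v_2,v_3$), yielding the same three internally disjoint paths up to symmetry.
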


\begin{proof}
Let the cycle $C_4$ in $H$ be $u_1u_2u_3u_4$. Suppose on the contrary that $N_W(u_i)\cap N_W(u_j)\neq\emptyset$ for some vertices $u_i$ and $u_j$, $1\leq i\neq j\leq4$. It follows that $N_W(u_i)\neq\emptyset$ and $N_W(u_j)\neq\emptyset$. Let $w\in N_W(u_i)\cap N_W(u_j)$. If $u_i$ and $u_j$ are adjacent in $C_4$. Without loss of generality, we assume that $u_i=u_1$ and $u_j=u_2$. Then $u^\ast u_1$, $u^\ast u_3u_4u_1$ and $u^\ast u_2wu_1$ are three internally disjoint paths of length 1,3,3 between $u^\ast$ and $u_1$. So $G^\ast$ contains $\theta_{1,3,3}$ as a subgraph, a contradiction. Hence, $u_i$ and $u_j$ are not adjacent in $C_4$. Assume that $u_i=u_1$ and $u_j=u_3$. It is easy to get that $u_1u_2$, $u_1u_4u^\ast u_2$ and $u_1wu_3u_2$ are three internally disjoint paths of length 1,3,3 between $u_1$ and $u_2$. Clearly, $G^\ast$ contains $\theta_{1,3,3}$ as a subgraph, a contradiction. This completes the proof.
\end{proof}

For convenience, we divide $U$ into two subsets $U_0$ and $U_+$ where $U_0$ is the set of isolated vertices of $G^\ast[U]$ and $U_+=U\setminus U_0$. It is easy to see that $m=|U|+e(U_+)+e(U,W)+e(W)$. Since $\lambda(G^\ast)\mathbf{x}=A(G^\ast)\mathbf{x}$, we have
$$
\lambda x_{u^\ast}=\sum_{u\in U}x_u=\sum_{u\in U_+}x_u+\sum_{u\in U_0}x_u.
$$
Furthermore, we can get
\begin{align*}
\lambda^2x_{u^\ast}&=\lambda(\lambda x_{u^\ast})=\lambda\sum_{u\in U}x_u\\
&=\sum_{u\in U}\sum_{v\in N_G(u)}x_v=\sum_{v\in V(G)}d_U(v)x_v\\
&=|U|x_{u^\ast}+\sum_{u\in U_+}d_U(u)x_u+\sum_{w\in W}d_U(w)x_w.
\end{align*}
Therefore,
\begin{align*}
(\lambda^2-\lambda)x_{u^\ast}&=|U|x_{u^\ast}+\sum_{u\in U_+}(d_U(u)-1)x_u+\sum_{w\in W}d_U(w)x_w-\sum_{u\in U_0}x_u\\
&\leq|U|x_{u^\ast}+\sum_{u\in U_+}(d_U(u)-1)x_u+e(U,W)x_{u^\ast}-\sum_{u\in U_0}x_u.
\end{align*}
Recall that $\lambda\geq\frac{1+\sqrt{4m-3}}{2}$, that is, $\lambda^2-\lambda\geq m-1=|U|+e(U_+)+e(U,W)+e(W)-1$. Hence
$$
\sum_{u\in U_+}(d_U(u)-1)x_u\geq\left(e(U_+)+e(W)+\sum_{u\in U_0}\frac{x_u}{x_{u^\ast}}-1\right)x_{u^\ast}.
$$
Let $\mathcal{H}$ be the set of all non-trivial components in $G^\ast[U]$. For each non-trivial component $H$ of $\mathcal{H}$, we denote $\eta(H):=\sum_{u\in V(H)}(d_H(u)-1)x_u$. Clearly,
\begin{align*}\label{inequality 1}
\sum_{H\in\mathcal{H}}\eta(H)\geq\left(e(U_+)+e(W)+\sum_{u\in U_0}\frac{x_u}{x_{u^\ast}}-1\right)x_{u^\ast},\tag{1}
\end{align*}
with equality if and only if $\lambda^2-\lambda= m-1$ and $x_w=x_{u^\ast}$ for any $w\in W$ with $d_U(w)\geq 1$.

\begin{claim}\label{no four cycle}
$G^\ast[U]$ contains no any cycle of length four.
\end{claim}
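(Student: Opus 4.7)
I plan to argue by contradiction: suppose $G^\ast[U]$ contains a cycle $C=u_1u_2u_3u_4$ of length four, let $H$ be the component of $G^\ast[U]$ containing $C$, and derive a contradiction either by exhibiting $\theta_{1,3,3}\subseteq G^\ast$ or by producing a $\theta_{1,3,3}$-free graph of the same size with strictly larger spectral radius.

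The first step is to pin down $V(H)$. Since $G^\ast[U]$ is $P_5$-free and $C_{\ge 5}$-free, any extra vertex $v\in V(H)\setminus V(C)$ that is adjacent to a vertex of $C$ must have at least two neighbors in $V(C)$; otherwise $vu_iu_{i+1}u_{i+2}u_{i+3}$ would be a $P_5$ in $U$. If $v$ is adjacent to two consecutive cycle vertices $u_i,u_{i+1}$, then $vu_iu_{i-1}u_{i-2}u_{i+1}$ is a $P_5$ in $U$, contradiction. If $v$ is adjacent to two opposite cycle vertices, say $u_1$ and $u_3$, then $u^\ast v$, $u^\ast u_2u_1v$ and $u^\ast u_4u_3v$ are three internally disjoint paths of lengths $1,3,3$ in $G^\ast$, giving $\theta_{1,3,3}\subseteq G^\ast$, contradiction. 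A vertex $v$ with three or four neighbors in $V(C)$ always contains a consecutive pair, reducing to the previous case. Since $H$ is connected, some $v\in V(H)\setminus V(C)$ would have to be adjacent to a vertex of $C$; hence $V(H)=V(C)$.

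Next I would rule out diagonals. If $H$ contains the diagonal $u_1u_3$ (or $u_2u_4$), then $H$ is either $K_4-e$ or $K_4$, and I would use Lemma \ref{lem:Wu-Xiao-2005} to shift the $W$-neighbors of a degree-two (or low-Perron-weight) vertex of $H$ onto $u^\ast$, combined with detaching the diagonal and re-installing the same number of edges as pendants at $u^\ast$, to obtain a $\theta_{1,3,3}$-free graph $G'$ of the same size with $\lambda(G')>\lambda(G^\ast)$, contradicting the extremality of $G^\ast$. The extra edges provided by the diagonal are precisely the slack required for this relocation to strictly increase $\lambda$ while preserving $\theta_{1,3,3}$-freeness.

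The remaining case is $H=C_4$ with no diagonal. Here I would combine inequality \eqref{inequality 1} with the four Perron eigenequations
\begin{equation*}
\lambda x_{u_i}=x_{u^\ast}+x_{u_{i-1}}+x_{u_{i+1}}+\sum_{w\in N_W(u_i)}x_w,\qquad i=1,2,3,4.
\end{equation*}
Summing yields $(\lambda-2)\sum_{i=1}^{4}x_{u_i}=4x_{u^\ast}+\sum_{w\in\bigcup_i N_W(u_i)}x_w$, where the disjointness of the sets $N_W(u_i)$ from Lemma \ref{pro of cycle four} prevents double-counting. Together with $\eta(H)=\sum_{i=1}^{4}x_{u_i}$ and the contribution $e(H)=4$ to $e(U_+)$, the lower bound in \eqref{inequality 1} (after accounting for the other components $H'\in\mathcal{H}\setminus\{H\}$ and $U_0$) should force $W=\emptyset$, $U_0=\emptyset$, and $\mathcal{H}=\{H\}$, yielding $m=|U|+e(U_+)=8$ and contradicting $m\ge 43$. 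The main obstacle will be this last step: for the bare $C_4$ there is no diagonal to exploit, so the contradiction must come from the delicate interplay between \eqref{inequality 1}, the four eigenequations, and the disjointness conclusion of Lemma \ref{pro of cycle four}; the diagonal and extra-vertex cases are comparatively easier because the extra structure either directly produces a forbidden subgraph or admits an edge-shift of strict spectral benefit.
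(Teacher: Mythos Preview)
Your first step, reducing any $C_4$-containing component $H$ of $G^\ast[U]$ to one of $C_4$, $C_4+e$, or $K_4$, is correct and matches the paper. The remaining two steps, however, have genuine gaps.

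For the diagonal cases you invoke an edge-shift ``detaching the diagonal and re-installing the same number of edges as pendants at $u^\ast$''. Lemma~\ref{lem:Wu-Xiao-2005} does not support this: it moves existing edges $vv_i$ to $uv_i$ with $v_i\in N(v)\setminus N(u)$, whereas reattaching a deleted edge as a pendant at $u^\ast$ requires introducing a \emph{new} vertex, which is outside the scope of that lemma and has no a priori guarantee of increasing $\lambda$. Moreover, even the part of your shift that stays within Lemma~\ref{lem:Wu-Xiao-2005} (moving $W$-neighbors of some $u_i$ to $u^\ast$) needs a verification that the resulting graph is still $\theta_{1,3,3}$-free, which you omit. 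The paper avoids all of this by treating the three shapes uniformly through the bound $\eta(H)<(e(H)-1)x_{u^\ast}+\frac{2}{\lambda-3}\sum_{w\in W_H}x_w$, obtained by summing the four eigen-inequalities as you do (with $\lambda-3$ in place of your $\lambda-2$, to accommodate the possible diagonals).

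For the bare $C_4$ case the missing idea is precisely how to control $\sum_{w\in W_H}x_w$. Your displayed identity and inequality~\eqref{inequality 1} alone cannot force $W=\emptyset$ (or yield $m=8$), because $|W_H|$ can be arbitrarily large. The paper's key extra step is this: for each $w\in W_H$ one has $d_U(w)=1$ --- Lemma~\ref{pro of cycle four} gives $d_{V(H)}(w)=1$, and a separate forbidden-subgraph check gives $d_{U\setminus V(H)}(w)=0$ --- so by Lemma~\ref{lem:Zhai-Lin-2021} $d_W(w)\ge 1$, whence $\sum_{H\in\mathcal{H}'}\sum_{w\in W_H}x_w\le 2e(W)x_{u^\ast}$. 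Feeding this into the summed $\eta$-bound and comparing with~\eqref{inequality 1} gives a direct contradiction (not the conclusion $m=8$). You should also note that several components may contain a $C_4$, so one must bound all $H\in\mathcal{H}'$ simultaneously, as the paper does, rather than a single $H$.
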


\begin{proof}
Suppose to the contrary that $G^\ast[U]$ contains $C_4$, that is, $G^\ast[U]$ contains a component $H\in\{C_4,C_4+e,K_4\}$ where $C_4+e$ is the graph obtained from $C_4$ by adding one edge to two nonadjacent vertices. Let $\mathcal{H'}$ be the family of components of $G^\ast[U]$ each of which contains $C_4$ as a subgraph, then $\mathcal{H}\setminus\mathcal{H'}$ be the family of other components of $G^\ast[U]$ each of which is a tree or unicyclic graph. Therefore, for each $H\in \mathcal{H}\setminus\mathcal{H'}$, we have
$$
\eta(H)=\sum_{u\in V(H)}(d_H(u)-1)x_u\leq(2e(H)-|H|)x_{u^\ast}\leq e(H)x_{u^\ast}.
$$
Next we show that
$$
\eta(H)<(e(H)-1)x_{u^\ast}+\frac{2\sum_{w\in W_H}x_w}{\lambda-3}
$$
for each $H\in \mathcal{H'}$. Let $H^\ast\in\mathcal{H'}$ with $V(H^\ast)=\{u_1,u_2,u_3,u_4\}$ and the cycle of length four be $u_1u_2u_3u_4$.

First, we consider the case $W_{H^\ast}=\emptyset$. Let $x_{u_1}=\max\{x_{u_i}|1\leq i\leq4\}$. Then
$$
\lambda x_{u_1}=\sum_{u\in N(u_1)}x_u\leq x_{u^\ast}+x_{u_2}+x_{u_3}+x_{u_4}\leq x_{u^\ast}+3x_{u_1}.
$$
Hence, $x_{u_1}\leq\frac{1}{\lambda-3}x_{u^\ast}$. Since $m\geq43$, we have $\lambda\geq\frac{1+\sqrt{4m-3}}{2}\geq7$. Thus, $x_{u_1}<\frac{1}{2}x_{u^\ast}$ and \begin{align*}
\eta(H^\ast)\leq(2e(H^\ast)-|H^\ast|)x_{u_1}< (e(H^\ast)-2)x_{u^\ast}<(e(H^\ast)-1)x_{u^\ast}+\frac{2\sum_{w\in W_{H^\ast}}x_w}{\lambda-3},
\end{align*}
as desired.

In the following, we assume that $W_{H^\ast}\neq\emptyset$. We consider the following two cases.

{\bf Case 1.} All vertices in $W_{H^\ast}$ have a unique common neighbor in $V(H^\ast)$.

Without loss of generality, let the common neighbor be $u_1$. It follows that $N_W(u_i)=\emptyset$ for $i\in\{2,3,4\}$. Let $x_{u_2}=\max\{x_{u_i}|2\leq i\leq4\}$. Then
$$
\lambda x_{u_2}\leq x_{u_1}+x_{u_3}+x_{u_4}+x_{u^\ast}\leq 2x_{u_2}+2x_{u^\ast}.
$$
Thus, $x_{u_2}\leq\frac{2}{\lambda-2}x_{u^\ast}\leq\frac{2}{5}x_{u^\ast}$ since $\lambda\geq7$. Therefore, we have
\begin{align*}
\eta(H^\ast)&=\sum_{u\in V(H^\ast)}(d_{H^\ast}(u)-1)x_u\\
&\leq(d_{H^\ast}(u_1)-1)x_{u_1}+(2e(H^\ast)-d_{H^\ast}(u_1)-3)x_{u_2}\\
&\leq\left(d_{H^\ast}(u_1)-1+\frac{4}{5}e(H^\ast)-\frac{2}{5}d_{H^\ast}(u_1)-\frac{6}{5}\right)x_{u^\ast}\\
&=\left(\frac{4}{5}e(H^\ast)+\frac{3}{5}d_{H^\ast}(u_1)-\frac{11}{5}\right)x_{u^\ast}.
\end{align*}
Since $d_{H^\ast}(u_1)\leq3$, the above inequality becomes
\begin{align*}
\eta(H^\ast)&\leq\left(\frac{4}{5}e(H^\ast)-\frac{2}{5}\right)x_{u^\ast}\\
&<(e(H^\ast)-1)x_{u^\ast}\\
&<(e(H^\ast)-1)x_{u^\ast}+\frac{2\sum_{w\in W_{H^\ast}}x_w}{\lambda-3}.
\end{align*}

{\bf Case 2.} There are at least two vertices of $W_{H^\ast}$ such that they have distinct neighbors in $V(H^\ast)$.

Since
\begin{align*}
\begin{cases}
\lambda x_{u_1}\leq x_{u_2}+x_{u_3}+x_{u_4}+x_{u^\ast}+\sum_{w\in N_{W_{H^\ast}}(u_1)}x_w,\\
\lambda x_{u_2}\leq x_{u_1}+x_{u_3}+x_{u_4}+x_{u^\ast}+\sum_{w\in N_{W_{H^\ast}}(u_2)}x_w,\\
\lambda x_{u_3}\leq x_{u_1}+x_{u_2}+x_{u_4}+x_{u^\ast}+\sum_{w\in N_{W_{H^\ast}}(u_3)}x_w,\\
\lambda x_{u_4}\leq x_{u_1}+x_{u_2}+x_{u_3}+x_{u^\ast}+\sum_{w\in N_{W_{H^\ast}}(u_4)}x_w,
\end{cases}
\end{align*}
we obtain
\begin{align*}
\lambda(x_{u_1}+x_{u_2}+x_{u_3}+x_{u_4})\leq3(x_{u_1}+x_{u_2}+x_{u_3}+x_{u_4})+4x_{u^\ast}+\sum_{i=1}^4\sum_{w\in N_{W_{H^\ast}}(u_i)}x_w.
\end{align*}
By Lemma \ref{pro of cycle four}, we get that $N_{W_{H^\ast}}(u_i)\cap N_{W_{H^\ast}}(u_j)=\emptyset$ for any vertices $u_i\neq u_j \in V(H^\ast)$. Thus, $\sum_{w\in W_{H^\ast}}x_w=\sum_{w\in N_W(V({H^\ast}))}x_w=\sum_{i=1}^4\sum_{w\in N_{W_{H^\ast}}(u_i)}x_w$. Therefore, by $\lambda\geq7$, we obtain
\begin{align*}
x_{u_1}+x_{u_2}+x_{u_3}+x_{u_4}&\leq\frac{4x_{u^\ast}}{\lambda-3}+\frac{\sum_{w\in W_{H^\ast}}x_w}{\lambda-3}\\
&\leq x_{u^\ast}+\frac{\sum_{w\in W_{H^\ast}}x_w}{\lambda-3}.
\end{align*}
Hence, by the definition of $\eta(H^\ast)$,
\begin{align*}
\eta(H^\ast)&\leq2(x_{u_1}+x_{u_2}+x_{u_3}+x_{u_4})\\
&\leq2x_{u^\ast}+\frac{2\sum_{w\in W_{H^\ast}}x_w}{\lambda-3}\\
&<(e(H^\ast)-1)x_{u^\ast}+\frac{2\sum_{w\in W_{H^\ast}}x_w}{\lambda-3}.
\end{align*}
Therefore, we conclude that $\eta(H)<(e(H)-1)x_{u^\ast}+\frac{2\sum_{w\in W_H}x_w}{\lambda-3}$ for each $H\in \mathcal{H'}$.
Recall that $\eta(H)\leq e(H)x_{u^\ast}$ for each $H\in \mathcal{H}\setminus\mathcal{H'}$. Thus,
\begin{align*}
\sum_{H\in\mathcal{H}}\eta(H)&=\sum_{H\in\mathcal{H'}}\eta(H)+\sum_{H\in\mathcal{H}\setminus\mathcal{H'}}\eta(H)\\
&<\sum_{H\in\mathcal{H'}}(e(H)-1)x_{u^\ast}+\sum_{H\in\mathcal{H'}}\frac{2\sum_{w\in W_H}x_w}{\lambda-3}+\sum_{H\in\mathcal{H}\setminus\mathcal{H'}}e(H)x_{u^\ast}\\
&=e(U_+)x_{u^\ast}-\sum_{H\in\mathcal{H'}}x_{u^\ast}+\sum_{H\in\mathcal{H'}}\frac{2\sum_{w\in W_H}x_w}{\lambda-3}.
\end{align*}
For any $H\in\mathcal{H'}$ satisfying $W_{H}=\emptyset$, we have $\sum_{w\in W_H}x_w=0$. For any $H\in\mathcal{H'}$ satisfying $W_{H}\neq\emptyset$ and any $w\in W_{H}$, since $G^\ast$ is $\theta_{1,3,3}$-free, we obtain $W_{H}\cap W_{G^\ast[U]\setminus H}=\emptyset$. Then $d_{U\setminus V(H)}(w)=0$. By Lemma \ref{pro of cycle four}, we have $d_H(w)=1$. It follows that $d_U(w)=1$. As $d(w)\geq2$ by Lemma \ref{lem:Zhai-Lin-2021}, it is easy to get that $d_W(w)\geq1$. Thus, $\sum_{H\in\mathcal{H'}}\sum_{w\in W_H}x_w\leq\sum_{H\in\mathcal{H'}}\sum_{w\in W_H}d_W(w)x_w\leq\sum_{H\in\mathcal{H'}}\sum_{w\in W_H}d_W(w)x_{u^\ast}\leq2e(W)x_{u^\ast}$. Note that $\lambda\geq7$. Therefore,
\begin{align*}
\sum_{H\in\mathcal{H}}\eta(H)&< e(U_+)x_{u^\ast}-\sum_{H\in\mathcal{H'}}x_{u^\ast}+\sum_{H\in\mathcal{H'}}\frac{2\sum_{w\in W_H}x_w}{\lambda-3}\\
&\leq e(U_+)x_{u^\ast}-\sum_{H\in\mathcal{H'}}x_{u^\ast}+\frac{4e(W)}{\lambda-3}x_{u^\ast}\\
&\leq (e(U_+)+e(W)-\sum_{H\in\mathcal{H'}}1)x_{u^\ast},
\end{align*}
which contradicts with (\ref{inequality 1}). Hence, $G^\ast[U]$ contains no $C_4$. This completes the proof.
\end{proof}

By Claim \ref{no four cycle}, we know that each non-trivial component of $G^\ast[U]$ is either a tree or a unicyclic graph $K_{1,r}+e$ with $r\geq2$. Let $c$ be the number of non-trivial tree-components of $G^\ast[U]$. Then
$$
\sum_{H\in\mathcal{H}}\eta(H)\leq\sum_{H\in\mathcal{H}}\sum_{u\in V(H)}(d_H(u)-1)x_{u^\ast}=\sum_{H\in\mathcal{H}}(2e(H)-|H|)x_{u^\ast}=(e(U_+)-c)x_{u^\ast}.
$$
Combining with (\ref{inequality 1}), we get
\begin{align*}\label{inequality 2}
e(W)\leq 1-c-\sum_{u\in U_0}\frac{x_u}{x_{u^\ast}}.\tag{2}
\end{align*}
Thus, $e(W)\leq1$ and $c\leq1$. In addition, if $e(W)=1$, then $c=0$, $U_0=\emptyset$, $\lambda^2-\lambda= m-1$, $x_w=x_{u^\ast}$ for any $w\in W$ with $d_U(w)\geq 1$ and $x_u=x_{u^\ast}$ for any $u\in V(H)$ with $d_H(u)\geq 2$.

\begin{claim}
$e(W)=0$.
\end{claim}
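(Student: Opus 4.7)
I will argue by contradiction, assuming $e(W)=1$ and denoting the unique edge in $W$ by $w_1w_2$. By the equality analysis of (\ref{inequality 2}) and the subsequent remark, this forces $c=0$, $U_0=\emptyset$, $\lambda^2-\lambda=m-1$; every non-trivial component of $G^\ast[U]$ has the form $K_{1,r}+e$ (with hub $h$, two triangle-leaves $t_1,t_2$, and $r-2$ plain leaves); and the equality case of (\ref{inequality 1}) yields $x_w=x_{u^\ast}$ for every $w\in W$ with $d_U(w)\geq1$, as well as $x_u=x_{u^\ast}$ for every hub and every triangle-leaf in every non-trivial component.

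First I note that $d_U(w_i)\geq 1$ for $i=1,2$: since $w_i\in W$, Lemma \ref{lem:Zhai-Lin-2021} gives $d(w_i)\geq2$, and $e(W)=1$ forces $d_W(w_i)=1$, so $d_U(w_i)\geq1$. Hence $x_{w_1}=x_{w_2}=x_{u^\ast}$. Applying the eigenvector equation at $w_1$ and using $N_W(w_1)=\{w_2\}$,
$$
(\lambda-1)x_{u^\ast}=\lambda x_{w_1}-x_{w_2}=\sum_{u\in N_U(w_1)}x_u\leq |N_U(w_1)|\,x_{u^\ast},
$$
so $|N_U(w_1)|\geq \lambda-1\geq 6$, using $\lambda\geq7$ since $m\geq 43$. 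The goal is now to contradict this lower bound by showing $N_U(w_1)$ is severely restricted.

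I split into two cases. In Case~I, $w_1$ has a neighbor $v\in\{h,t_1,t_2\}$ of some non-trivial component $H$. Using the triangle of $H$ as one length-$3$ path, $\theta_{1,3,3}$-freeness forces $N_U(w_1)\subseteq\{h,t_1,t_2\}$: for example, if $v=h$ and some $a\in N_U(w_1)\setminus\{h,t_1,t_2\}$ existed, then between $u^\ast$ and $h$ the two length-$3$ paths $u^\ast\text{-}t_1\text{-}t_2\text{-}h$ and $u^\ast\text{-}a\text{-}w_1\text{-}h$ are internally disjoint, yielding $\theta_{1,3,3}$; the cases $v=t_1$ or $v=t_2$ are symmetric, using the triangle path $u^\ast\text{-}h\text{-}t_{3-i}\text{-}t_i$ in place of the above. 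The same triangle-versus-$W$-path argument also rules out $w_1$ being adjacent to $\{h,t_1,t_2\}$ of two distinct components. Thus $|N_U(w_1)|\leq3<\lambda-1$, a contradiction. In Case~II, $w_1$ avoids every $\{h,t_1,t_2\}$, so $N_U(w_1)$ consists only of plain leaves. I claim $|N_U(w_1)|\leq 1$: if $\ell,\ell'\in N_U(w_1)$ were distinct plain leaves (in the same or different components), and $h_\ell$ is the hub and $t_1^{\ell}$ a triangle-leaf of the component containing $\ell$, then between $u^\ast$ and $\ell$ the paths $u^\ast\text{-}t_1^{\ell}\text{-}h_\ell\text{-}\ell$ and $u^\ast\text{-}\ell'\text{-}w_1\text{-}\ell$ are internally disjoint (the four interior vertices are pairwise distinct, as $w_1\in W$ and $\ell'$ is a plain leaf), producing $\theta_{1,3,3}$; hence $|N_U(w_1)|\leq1<\lambda-1$, another contradiction.

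Both cases being impossible, the assumption $e(W)=1$ fails, and thus $e(W)=0$. The main obstacle I expect is Case~I: one has to carefully enumerate the possible length-$3$ paths available between $u^\ast$ and each of $h,t_1,t_2$ to verify that whenever $w_1$ attaches to one of these three vertices (or to two different components' such vertices), some extraneous $U$-neighbor of $w_1$ immediately yields a $\theta_{1,3,3}$; Case~II is conceptually easier because the hub plus triangle automatically supplies one internally disjoint length-$3$ path to any plain leaf.
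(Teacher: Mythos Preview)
Your proof is correct, and it follows a genuinely different route from the paper's.

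Both arguments begin the same way: assume $e(W)=1$ with edge $w_1w_2$, infer $c=0$, $U_0=\emptyset$, so every component of $G^\ast[U]$ is a $K_{1,r}+e$, and obtain $x_{w_1}=x_{w_2}=x_{u^\ast}$. From there the paths diverge. The paper fixes one component with triangle $u_1u_2u_3$, shows by a short case analysis that $d_{C_3}(w_1)+d_{C_3}(w_2)\leq3$, and then combines the eigenvector equations at $w_1$ and $w_2$ with the identity $\sum_{u\in U}x_u=\lambda x_{u^\ast}$ and $x_{u_i}=x_{u^\ast}$ to obtain the algebraic contradiction $2\lambda x_{u^\ast}\leq 2\lambda x_{u^\ast}-x_{u^\ast}$. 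You instead read off from the eigenvector equation at $w_1$ alone the degree bound $|N_U(w_1)|\geq\lambda-1\geq6$, and then contradict it combinatorially: if $w_1$ touches some triangle vertex you show $N_U(w_1)$ is trapped inside that one triangle (so $|N_U(w_1)|\le3$), while if $w_1$ touches only plain leaves you show $|N_U(w_1)|\le1$. Your approach is more direct in that it never needs $x_{u_i}=x_{u^\ast}$ or the global sum $\sum_{u\in U}x_u$; the paper's approach, in turn, only has to control the interaction of $w_1,w_2$ with a \emph{single} chosen triangle rather than classify all of $N_U(w_1)$. Both sets of forbidden-$\theta_{1,3,3}$ verifications are straightforward, and your case split between ``triangle vertex'' and ``plain leaf'' neighbors cleanly handles every possibility, including neighbors in different components.
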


\begin{proof}
Suppose on the contrary that $e(W)=1$. Let $w_1w_2$ be the unique edge in $G^\ast[W]$. Note that $c=0$ and $U_0=\emptyset$, it follows that each component of $G^\ast[U]$ is isomorphic to a unicyclic graph $K_{1,r}+e$ with $r\geq2$. That is, each component of $G^\ast[U]$ contains a triangle. Let $H$ be a component of $G^\ast[U]$ and $u_1u_2u_3$ is the triangle $C_3$ of $H$. Since $G^\ast$ is $\theta_{1,3,3}$-free, we get that $d_{C_3}(w_1)+d_{C_3}(w_2)\leq3$. Otherwise, there are two cases. One case is $d_{C_3}(w_i)=3$ and $d_{C_3}(w_j)\geq1$ for $i\neq j\in\{1,2\}$. Without loss of generality, we suppose that $d_{C_3}(w_1)=3$ and $d_{C_3}(w_2)\geq1$. Let $u_1\in N_{C_3}(w_2)$. It is easy to find that $u_1u_2$, $u_1u^\ast u_3u_2$ and $u_1w_2w_1u_2$ are three internally disjoint paths of length 1,3,3 between $u_1$ and $u_2$. It is a contradiction. The other case is $d_{C_3}(w_i)\geq2$ for $i\in\{1,2\}$. Since $|C_3|=3$, we obtain $|N_{C_3}(w_1)\cap N_{C_3}(w_2)|\geq1$. Assume $u_1,u_p\in N_{C_3}(w_1)$ and $u_1,u_q\in N_{C_3}(w_2)$ with $p,q\in\{2,3\}$. If $p\neq q$, then $u_1u_q$, $u_1w_1w_2u_q$ and $u_1u^\ast u_pu_q$ are three internally disjoint paths of length 1,3,3 between $u_1$ and $u_q$. If $p= q$, for convenience, suppose $p=q=2$, then $u_1u_q$, $u_1w_1w_2u_q$ and $u_1u^\ast u_3u_q$ are three internally disjoint paths of length 1,3,3 between $u_1$ and $u_q$. It is also a contradiction. By Lemma \ref{lem:Zhai-Lin-2021} and $e(W)=1$, we get $d_U(w_i)\geq1$ for $i\in\{1,2\}$. Recall that $x_w=x_{u^\ast}$ for any $w\in W$ with $d_U(w)\geq 1$. We get $x_{w_1}=x_{w_2}=x_{u^\ast}$. As $d_H(u_i)\geq2$, we have $x_{u_i}=x_{u^\ast}$ for $i\in\{1,2,3\}$. This implies that
\begin{align*}
2\lambda x_{u\ast}&=\lambda x_{w_1}+\lambda x_{w_2}\\
&=x_{w_2}+\sum_{u\in N_U(w_1)}x_u+x_{w_1}+\sum_{u\in N_U(w_2)}x_u\\
&\leq x_{w_2}+x_{w_1}+\sum_{u\in N_{C_3}(w_1)}x_u+\sum_{u\in N_{C_3}(w_2)}x_u+2\sum_{u\in U\setminus C_3}x_u\\
&\leq x_{w_2}+x_{w_1}+3x_{u^\ast}+2(\lambda x_{u^\ast}-x_{u_1}-x_{u_2}-x_{u_3})\\
&=2x_{u^\ast}+3x_{u^\ast}+2(\lambda x_{u^\ast}-3x_{u^\ast})\\
&=2\lambda x_{u^\ast}-x_{u^\ast}.
\end{align*}
It is a contradiction for $x_{u^\ast}>0$. The proof is complete.
\end{proof}

\begin{claim}\label{no triangle}
$G^\ast[U]$ contains no triangle.
\end{claim}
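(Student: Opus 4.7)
The plan is to assume $G^\ast[U]$ contains a triangle and derive a contradiction by sharpening the bound on $\eta(H_0)$ for the triangle-containing component. By Claim~\ref{no four cycle}, such a component must have the form $H_0 \cong K_{1,r}+e$ with $r \geq 2$; let $u$ denote its center and $uu_1u_2$ its triangle. The target strict inequality is $\eta(H_0) < (e(H_0)-1) x_{u^{\ast}}$; combined with the same bound for every other triangle-containing component in $\mathcal{H}$ and with $\eta(H) \leq (e(H)-1) x_{u^{\ast}}$ for tree components, one obtains $\sum_{H \in \mathcal{H}}\eta(H) < (e(U_+)-c_{\text{total}}) x_{u^{\ast}}$, where $c_{\text{total}} \geq 1$ counts the non-trivial components. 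Plugging this into inequality~(\ref{inequality 1}) together with $e(W)=0$ (from the previous claim) yields $c_{\text{total}} + \sum_{u\in U_0} x_u/x_{u^{\ast}} < 1$, contradicting $c_{\text{total}} \geq 1$.

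\textbf{Structural constraints.} First I would use $\theta_{1,3,3}$-freeness to restrict the $W$-side neighborhoods of $V(H_0)$, in the spirit of Lemma~\ref{pro of cycle four}. The statements to establish are: (i) $N_W(u_i) = \emptyset$ for every leaf $u_i$ of $H_0$ with $i \geq 3$; (ii) $N_U(w) \subseteq \{u, u_1, u_2\}$ for every $w \in N_W(V(H_0))$; and (iii) when $r \geq 3$, in fact $N_W(V(H_0)) = \emptyset$. Each of these is proved by the same recipe: a $W$-neighbor $w$ violating the conclusion, together with its second $U$-neighbor (whose existence is guaranteed by Lemma~\ref{lem:Zhai-Lin-2021} and $e(W)=0$) and---for (iii)---an additional leaf $u_3$, allows one to write down three internally disjoint paths of lengths $1,3,3$ between $u^{\ast}$ (or $u$) and some triangle vertex, contradicting $\theta_{1,3,3}$-freeness.

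\textbf{Case $r \geq 3$.} With (iii) in hand, the Perron equations at $u_1, u_2$ simplify to $\lambda x_{u_1} = x_{u^{\ast}} + x_u + x_{u_2}$ and $\lambda x_{u_2} = x_{u^{\ast}} + x_u + x_{u_1}$. Adding them yields $(\lambda-1)(x_{u_1}+x_{u_2}) = 2(x_{u^{\ast}}+x_u) \leq 4 x_{u^{\ast}}$. Since $\lambda \geq 7$ (from $m \geq 43$), $x_{u_1}+x_{u_2} \leq \tfrac{2}{3} x_{u^{\ast}}$, and then
$$\eta(H_0) = (r-1) x_u + (x_{u_1}+x_{u_2}) < (r-1) x_{u^{\ast}} + x_{u^{\ast}} = (e(H_0)-1) x_{u^{\ast}},$$
as required.

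\textbf{Case $r = 2$ and the main obstacle.} When $r = 2$ the stronger constraint (iii) is not available, so $w \in N_W(V(H_0))$ may occur with $N_U(w) \in \{\{u_1,u_2\}, \{u,u_1\}, \{u,u_2\}, \{u,u_1,u_2\}\}$. If sufficiently many such $w$'s are present to force $x_{u_1} \geq x_{u^{\ast}}$, one relabels $u^{\ast}$ to be $u_1$; in the new labeling, the triangle lies inside a $K_{1,r'}+e$-component with $r' = r + |N_W(V(H_0))| \geq 3$, reducing matters to the previous case. The delicate range is when $x_{u_1}$ is strictly below $x_{u^{\ast}}$ but close to it, so that relabeling is unavailable. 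I expect this to be the hardest step: it requires combining the identity $\lambda x_w = \sum_{y\in N_U(w)} x_y$ for each $w \in N_W(V(H_0))$ with the edge-count bound $e(U,W) \geq 2|N_W(V(H_0))|$ and $\lambda^2 \geq m$, to still produce the strict inequality $\eta(H_0) < x_{u^{\ast}}\cdot(e(H_0)-1)$. As a backup, Lemma~\ref{lem:Wu-Xiao-2005} applied with the pair $(u^{\ast}, u_i)$ offers the possibility of rotating $u_i$'s $W$-edges onto $u^{\ast}$, provided one checks via constraint (ii) that no new $\theta_{1,3,3}$ is introduced by the rotation.
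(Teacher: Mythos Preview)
Your overall strategy matches the paper's: bound $\eta(H_0)$ strictly below $(e(H_0)-1)x_{u^\ast}$ for a triangle-containing component and then invoke inequality~(\ref{inequality 1}). The $r\geq 3$ case is handled correctly, and your structural constraint~(iii) is exactly what the paper proves (by an exhaustive check that any $w\in W_{H^\ast}$ with $d_U(w)\geq 2$ creates a $\theta_{1,3,3}$ when $r\geq 3$); your eigenvector computation for that case is a clean variant of the paper's.

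The $r=2$ case, however, has two problems. First, the relabeling idea is a dead end: $x_{u_1}\geq x_{u^\ast}$ can only mean equality, and even then swapping the role of $u^\ast$ does not reduce to the $r\geq 3$ case --- the new $G^\ast[N(u_1)]$ need not be $C_4$-free a priori, and you would be re-running the entire argument from scratch rather than inducting on anything. Second, you are missing the structural reduction that makes the rotation work. The paper proceeds by splitting on $|W_{H^\ast}|$. When $|W_{H^\ast}|\leq 1$ (including the $W_{H^\ast}=\emptyset$ subcase, which your $r\geq 3$ computation actually covers verbatim), direct eigenvector estimates give $\eta(H^\ast)<2x_{u^\ast}$. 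When $|W_{H^\ast}|\geq 2$, the paper first shows that no $w$ can have $d_{H^\ast}(w)=3$ and that any two $w$'s must share \emph{both} neighbors in the triangle (each of the other possibilities yields an explicit $\theta_{1,3,3}$); only then is the rotation of $N_W(u_1)$ from $u_1$ to $u^\ast$ well-defined and $\theta_{1,3,3}$-free. Without this reduction, your constraint~(ii) leaves open mixed configurations (e.g.\ $N(w)=\{u,u_1\}$ and $N(w')=\{u,u_2\}$) under which the rotated graph could contain a $\theta_{1,3,3}$, so the ``backup'' you mention cannot be applied blindly. In short: drop the relabeling, split $r=2$ by $|W_{H^\ast}|$, and for $|W_{H^\ast}|\geq 2$ first force all $w$'s to a common pair of neighbors before rotating --- this is the paper's route, and your backup is in fact the main tool.
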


\begin{proof}
Suppose on the contrary that $G^\ast[U]$ contains triangles. Then $G^\ast[U]$ contains a component which is isomorphic to $K_{1,r}+e$ with $r\geq2$. Let $H^\ast\cong K_{1,r}+e$ be a component of $G^\ast[U]$. It follows that $e(H^\ast)=r+1$. Suppose $u_1u_2u_3$ is the triangle of $H^\ast$ and $d_{H^\ast}(u_1)=d_{H^\ast}(u_2)=2$.

If $W_{H^\ast}=\emptyset$, then $x_{u_1}=x_{u_2}$. Hence,
\begin{align*}
\lambda x_{u_1}=x_{u_2}+x_{u_3}+x_{u^\ast}\leq x_{u_1}+2x_{u^\ast}.
\end{align*}
This implies that $x_{u_1}\leq\frac{2}{\lambda-1}x_{u^\ast}$. Therefore,
$$
\eta(H^\ast)=x_{u_1}+x_{u_2}+(r-1)x_{u_3}\leq\frac{4}{\lambda-1}x_{u^\ast}+(r-1)x_{u^\ast}.
$$
Since $m\geq43$ and $\lambda\geq7$, we get $\eta(H^\ast)<rx_{u^\ast}=(e(H^\ast)-1)x_{u^\ast}$. Note that $\eta(H)\leq e(H)x_{u^\ast}$ for any other component $H\in\mathcal{H}\setminus H^\ast$ of $G^\ast[U]$. Hence,
\begin{align*}
\sum_{H\in\mathcal{H}}\eta(H)&=\eta(H^\ast)+\sum_{H\in\mathcal{H}\setminus H^\ast}\eta(H)\\
&<(e(H)-1)x_{u^\ast}+\sum_{H\in\mathcal{H}\setminus H^\ast}e(H)x_{u^\ast}\\
&=(e(U_+)-1)x_{u^\ast},
\end{align*}
which contradicts with (\ref{inequality 1}). Thus, $W_{H^\ast}\neq\emptyset$.

Because $e(W)=0$, by Lemma \ref{lem:Zhai-Lin-2021}, we have $d_U(w)\geq2$ for any $w\in W_{H^\ast}$. Suppose $r\geq3$, let $u_4,\ldots,u_{r+1}$ be the neighbors of $u_3$. For $w\in W_{H^\ast}$, if $\{u_1,u_2\}\subseteq N_U(w)$, then $u_1u_3$, $u_1u^\ast u_4u_3$ and $u_1w u_2u_3$ are three internally disjoint paths of length 1,3,3 between $u_1$ and $u_3$, a contradiction. If $\{u_i,u_3\}\subseteq N_U(w)$, then $u_ju_3$, $u_ju^\ast u_4u_3$ and $u_ju_iwu_3$ are three internally disjoint paths of length 1,3,3 between $u_j$ and $u_3$ where $i\neq j\in\{1,2\}$, a contradiction. If $\{u_i,u_j\}\subseteq N_U(w)$, then $u_iu_3$, $u_iu^\ast u_{\{1,2\}\setminus i}u_3$ and $u_iwu_ju_3$ are three internally disjoint paths of length 1,3,3 between $u_i$ and $u_3$ where $i\in\{1,2\}$ and $j\in\{4,\ldots,r+1\}$, a contradiction. If $\{u_3,u_j\}\subseteq N_U(w)$, then $u^\ast u_3$, $u^\ast u_jwu_3$ and $u^\ast u_1u_2u_3$ are three internally disjoint paths of length 1,3,3 between $u^\ast$ and $u_3$ where $j\in\{4,\ldots,r+1\}$, a contradiction. If $\{u_i,u_j\}\subseteq N_U(w)$, then $u^\ast u_j$, $u^\ast u_iwu_j$ and $u^\ast u_1u_3u_j$ are three internally disjoint paths of length 1,3,3 between $u^\ast$ and $u_j$ where $i\neq j\in\{4,\ldots,r+1\}$, a contradiction. If $\{u_i,v\}\subseteq N_U(w)$, then $u^\ast u_i$, $u^\ast vwu_i$ and $u^\ast u_4u_3u_i$ are three internally disjoint paths of length 1,3,3 between $u^\ast$ and $u_i$ where $i\in\{1,2\}$ and $v\in U\setminus V(H^\ast)$, a contradiction. If $\{u_3,v\}\subseteq N_U(w)$, then $u^\ast u_3$, $u^\ast vwu_3$ and $u^\ast u_1u_2u_3$ are three internally disjoint paths of length 1,3,3 between $u^\ast$ and $u_3$ where $v\in U\setminus V(H^\ast)$, a contradiction. If $\{u_i,v\}\subseteq N_U(w)$, then $u^\ast u_i$, $u^\ast vwu_i$ and $u^\ast u_1u_3u_i$ are three internally disjoint paths of length 1,3,3 between $u^\ast$ and $u_i$ where $i\in\{4,\ldots,r+1\}$ and $v\in U\setminus V(H^\ast)$, a contradiction. Therefore, $r=2$. That is, $H^\ast$ is a triangle $u_1u_2u_3$.

First, we assume that $|W_{H^\ast}|=1$. Let $W_{H^\ast}=\{w\}$. Since $G^\ast$ is $\theta_{1,3,3}$-free, it follows that $d_{U\setminus V(H^\ast)}(w)=0$. Therefore, $d(w)=d_{H^\ast}(w)$. As $H^\ast$ is a triangle, we obtain $d(w)=2$ or $d(w)=3$. If $d(w)=2$, without loss of generality, we suppose $N(w)=\{u_1,u_2\}$. Then $x_{u_1}=x_{u_2}$. Since
$$
\lambda x_{u_3}=x_{u_1}+x_{u_2}+x_{u^\ast}\leq 3x_{u^\ast},
$$
we obtain $x_{u_3}\leq\frac{3}{\lambda}x_{u^\ast}$. Furthermore, $$\
\lambda x_{u_1}=x_{u_2}+x_{u_3}+x_{u^\ast}+x_w\leq x_{u_1}+\frac{3}{\lambda}x_{u^\ast}+2x_{u^\ast}.
$$
This implies that $x_{u_1}\leq\frac{3+2\lambda}{\lambda(\lambda-1)}x_{u^\ast}$. Thus,
$$
\eta(H^\ast)=x_{u_1}+x_{u_2}+x_{u_3}\leq\frac{7\lambda+3}{\lambda(\lambda-1)}x_{u^\ast}.
$$
Since $\frac{7x+3}{x(x-1)}$ is decreasing in variable $x$ and $\lambda\geq7$, we get
$$
\eta(H^\ast)\leq\frac{7\times7+3}{7\times6}x_{u^\ast}<2x_{u^\ast}=(e(H^\ast)-1)x_{u^\ast}.
$$
Recall that $\eta(H)\leq e(H)x_{u^\ast}$ for any other component $H\in\mathcal{H}\setminus H^\ast$ of $G^\ast[U]$. Hence,
\begin{align*}
\sum_{H\in\mathcal{H}}\eta(H)&=\eta(H^\ast)+\sum_{H\in\mathcal{H}\setminus H^\ast}\eta(H)\\
&<(e(H^\ast)-1)x_{u^\ast}+\sum_{H\in\mathcal{H}\setminus H^\ast}e(H)x_{u^\ast}\\
&=(e(U_+)-1)x_{u^\ast},
\end{align*}
which contradicts with (\ref{inequality 1}). If $d(w)=3$, that is, $N(w)=\{u_1,u_2,u_3\}$, then $x_{u_1}=x_{u_2}=x_{u_3}$. By
$$
\lambda x_{u_1}=x_{u_2}+x_{u_3}+x_{u^\ast}+x_w\leq 2x_{u_1}+2x_{u^\ast},
$$
we obtain $x_{u_1}\leq\frac{2}{\lambda-2}x_{u^\ast}$. Therefore, by $\lambda\geq7$,
$$
\eta(H^\ast)=x_{u_1}+x_{u_2}+x_{u_3}\leq\frac{6}{\lambda-2}x_{u^\ast}<2x_{u^\ast}=(e(H^\ast)-1)x_{u^\ast}.
$$
Thus, $\sum_{H\in\mathcal{H}}\eta(H)<(e(U_+)-1)x_{u^\ast}$, a contradiction. So $|W_{H^\ast}|\geq2$.

Similarly, since $G^\ast$ is $\theta_{1,3,3}$-free, we have $d_{U\setminus V(H^\ast)}(w)=0$ for any $w\in W_{H^\ast}$. Therefore, $2\leq d(w)=d_{H^\ast}(w)\leq3$. If there is a vertex $w'\in W_{H^\ast}$ such that $d(w')=3$, then $N(w')=\{u_1,u_2,u_3\}$. Note that $|W_{H^\ast}|\geq2$, there exists a vertex $w''\neq w'$ in $W_{H^\ast}$ satisfying $d(w'')\geq2$. Suppose that $u_1,u_2\in N(w'')$. Then $u^\ast u_1$, $u^\ast u_3w'u_1$ and $u^\ast u_2w''u_1$ are three internally disjoint paths of length 1,3,3 between $u^\ast$ and $u_1$, a contradiction. Hence, $d(w)=d_{H^\ast}(w)=2$ for any $w\in W_{H^\ast}$. This implies that $1\leq|N(w)\cap N(w')|\leq2$ for any two vertices $w,w'\in W_{H^\ast}$. If $|N(w)\cap N(w')|=1$, without loss of generality, we assume that $N(w)=\{u_1,u_2\}$ and $N(w')=\{u_1,u_3\}$. It is easy to see that $u^\ast u_1$, $u^\ast u_3w'u_1$ and $u^\ast u_2wu_1$ are three internally disjoint paths of length 1,3,3 between $u^\ast$ and $u_1$, a contradiction. Therefore, $|N(w)\cap N(w')|=2$. That is, $N(w)=N(w')$ for any $w,w'\in W_{H^\ast}$. Without loss of generality, we suppose that $N(w)=\{u_1,u_2\}$ for any $w\in W_{H^\ast}$. Let $G'$ be a graph such that $V(G')=V(G^\ast)$ and $E(G')=E(G^\ast)-\{u_1w|w\in N_{W}(u_1)\}+\{u^\ast w|w\in N_{W}(u_1)\}$. One can verify that $G'$ is $\theta_{1,3,3}$-free. By Lemma \ref{lem:Wu-Xiao-2005}, we have $\lambda(G')>\lambda$. It is a contradiction with the maximality of $G^\ast$. We complete the proof.
\end{proof}

{\bf Proof of Theorem \ref{main theorem}.} By Claims \ref{no four cycle} and \ref{no triangle}, we have that each component of $G^\ast[U]$ is a non-trivial tree or an isolated vertex. By inequality (\ref{inequality 2}), the number $c$ of non-trivial tree-components is at most 1. If $c=0$, then $G^\ast$ is bipartite. By Lemma \ref{lem:Nikiforov}, $\lambda\leq\sqrt{m}<\frac{1+\sqrt{4m-3}}{2}$, a contradiction. Hence $c=1$. It follows that $U_0=\emptyset$. Let $H$ be the unique component of $G^\ast[U]$. That is, $G^\ast[U]\cong H$ and $W=W_H$. Since $G^\ast$ is $\theta_{1,3,3}$-free, $diam(H)\leq3$.

If $diam(H)=3$, then $H$ is a double star. Denote the two centers of $H$ by $u_1$ and $u_2$. If $W_{H}=\emptyset$, then $G^\ast=u^\ast\vee H$.  Without loss of generality, suppose that $x_{u_1}\geq x_{u_2}$. Let $G'$ be a graph such that $V(G')=V(G^\ast)$ and $E(G')=E(G^\ast)-\{u_2v|v\in N_{H}(u_2)\setminus u_1\}+\{u_1v|v\in N_{H}(u_2)\setminus u_1\}$. One can verify that $G'$ is $\theta_{1,3,3}$-free. By Lemma \ref{lem:Wu-Xiao-2005}, we have $\lambda(G')>\lambda$, a contradiction. If $W_{H}\neq\emptyset$, then $N(w)=\{u_1,u_2\}$ for any $w\in W_{H}$. Otherwise, $G^\ast$ contains $\theta_{1,3,3}$ as a subgraph, a contradiction. Let $G''$ be a graph such that $V(G'')=V(G^\ast)$ and $E(G'')=E(G^\ast)-\{u_2w|w\in W_{H}\}+\{u^\ast w|w\in W_{H}\}$. Obviously, $G''$ is $\theta_{1,3,3}$-free. By Lemma \ref{lem:Wu-Xiao-2005}, we have $\lambda(G'')>\lambda$, a contradiction. Hence, $diam(H)\leq2$. That is, $H\cong K_{1,r}$ with $r\geq1$.

Let $V(H)=\{u_0,u_1,\ldots,u_r\}$ and $u_0$ be the center of $H$ with $r\geq1$. Since
$$
\lambda x_{u_0}=x_{u_1}+x_{u_2}+\cdots+x_{u_r}+x_{u^\ast}+\sum_{w\in N_W(u_0)}x_w,
$$
and
$$
\lambda x_{u^\ast}=x_{u_0}+x_{u_1}+x_{u_2}+\cdots+x_{u_r},
$$
we obtain $\lambda(x_{u_0}-x_{u^\ast})=x_{u^\ast}+\sum_{w\in N_W(u_0)}x_w-x_{u_0}$. Note that $x_{u_0}\leq x_{u^\ast}$ and $x_v>0$ for $v\in V(G^\ast)$. Thus, $N_W(u_0)=\emptyset$ and $x_{u_0}=x_{u^\ast}$. If $W\neq\emptyset$, by Lemmma \ref{lem:Zhai-Lin-2021}, we have $d(w)\geq2$ for $w\in W_{H}$. Note that $e(W)=0$. Let $w_0\in W$. Suppose $u_1,u_2\in N_{H}(w_0)$. If $r\geq3$, then $u^\ast u_2$, $u^\ast u_1w_0u_2$ and $u^\ast u_ru_0u_2$ are three internally disjoint paths of length 1,3,3 between $u^\ast$ and $u_2$, a contradiction. So $r\leq2$. Since $d(w)\geq2$ and $N_W(u_0)=\emptyset$, we obtain $r\neq1$. Therefore, $r=2$ and $N(w)=\{u_1,u_2\}$ for any $w\in W$. Hence, $x_{u_0}=x_{u^\ast}$ and $x_{u_1}=x_{u_2}$. As
$$
\lambda x_{u^\ast}=x_{u_0}+x_{u_1}+x_{u_2}= x_{u^\ast}+2x_{u_1},
$$
it follows that $x_{u_1}=\frac{\lambda-1}{2}x_{u^\ast}$. Note that $x_{u^\ast}\geq x_{u_1}$. We can get $\lambda\leq3$, it is a contradiction with $\lambda\geq7$. Thus $W=\emptyset$. Equivalently, $G^\ast\cong K_1\vee K_{1,r}$ with $2r+1=m$. Hence $G^\ast\cong K_2\vee \frac{m-1}{2}K_1$. This completes the proof.\hfill$\Box$
\section*{\bf Acknowledgments}

This work was supported by National Natural Science Foundation of China (Nos.12131013 and 12161141006).\\

\end{document}